\newtheorem{ut}{Theorem}
\newtheorem{ul}[ut]{Lemma}
\newtheorem{up}[ut]{Proposition}
\newtheorem{uc}[ut]{Corollary}
\theoremstyle{definition}
\newtheorem{ue}{Example}
\newtheorem{uq}{Question}
\theoremstyle{remark}
\newcommand{\interior}[1]{%
  {\kern0pt#1}^{\mathrm{o}}%
}
\begin{document}

\title{Meager composants of tree-like continua}

\subjclass[2010]{54F15, 54F50, 54H15} 
\keywords{continuum, tree-like, hereditarily unicoherent, upper semi-continuous, meager composant}
\author[D.S. Lipham]{David S. Lipham}
\address{Department of Mathematics, Auburn University at Montgomery, United States of America}
\email{dsl0003@auburn.edu}

\begin{abstract}A subset $M$ of a continuum $X$ is called a \textit{meager composant} if $M$ is maximal with respect to the property that every two of its points are contained in a nowhere dense subcontinuum of $X$. 
Motivated by questions of Bellamy, Mouron and Ordo\~{n}ez, we show  that no tree-like continuum has a proper open meager composant, and that every tree-like continuum has either $1$ or $2^{\aleph_0}$ meager composants. We also prove a decomposition theorem: If  $X$ is tree-like and every indecomposable subcontinuum of $X$ is nowhere dense, then the partition of $X$ into meager composants is upper semi-continuous and the space of meager composants is a dendrite.  
\end{abstract}

\maketitle

\section{Introduction}

  A \textbf{continuum} is a compact connected metric space.  Given a continuum $X$ and a point $x\in X$, then  $$M_x=\bigcup\big\{L\subset X:L\text{ is a nowhere dense subcontinuum of }X\text{ and }x\in L\big\}$$ is called the \textbf{meager composant of $x$} in $X$.   More generally,   $M\subset X$ is  a \textbf{meager composant} of $X$ if there exists $x\in X$ such that $M=M_x$.  Note that $$\mathcal M_X=\big\{M_x:x\in X\big\}$$ partitions $X$ into pairwise disjoint sets. This partition is topological in the sense that homeomorphisms respect its members. 
  
  Meager composants of continua were introduced by David Bellamy in \cite{1} as ``tendril classes''. They were subsequently investigated by Chris Mouron and Norberto Ordo\~{n}ez  in \cite{mou}, and by the current author in \cite{sing}.     Bellamy asked if  there exists a continuum with a proper open  meager composant  (i.e.\ a meager composant  which is open in $X$ but not equal to $X$)  \cite[Problem 25]{1}. In this article we will show that there is no tree-like example.  
   
   Mouron and Ordo\~{n}ez proved that $\mathcal M_X$ is  upper semi-continuous if $X$ is locally connected, hereditarily arcwise connected, or irreducible and hereditarily decomposable \cite[Corollary 8.2]{mou}. They asked to identify other classes of continua for which  $\mathcal M_X$ is  upper semi-continuous \cite[Problem 8.1]{mou}. Here we will add the class of tree-like continua whose indecomposable subcontinua are nowhere dense. We will also  prove that each  tree-like continuum has either $1$ or $2^{\aleph_0}$ meager composants.  In general it is not evident that the  cardinality of $\mathcal M_X$ must be  $1$ or $2^{\aleph_0}$  \cite[Problem 8.5]{mou}. There may even be   a continuum  with exactly $2$ meager composants, one of which is a singleton.
    \begin{uq}[{\cite[Problem 8.8]{mou}}]Is there a continuum $X=O\cup \{p\}$ where $O$ and $\{p
\}$ are meager composants of $X$?\end{uq}

\subsection*{Outline of the paper}The only property of tree-like continua that we will use  is hereditary unicoherence (defined in Section 2).  As such, all theorems will be stated and proved for hereditarily unicoherent continua. The  results are organized as follows.

Suppose that  $X$ is a hereditarily unicoherent continuum.
\begin{itemize}\renewcommand{\labelitemi}{\scalebox{.5}{{$\blacksquare$}}}
\item In Section 4 we will prove that $X$ has no proper open meager composant.
\item In Section 5 we will show that every meager composant of $X$ is closed if and only if every indecomposable subcontinuum of $X$ is nowhere dense. 
\item In Section 6 we will show that if every meager composant of $X$ is closed, then $\mathcal M_X$ is upper semi-continuous and the space $\mathcal M_X$ is a dendrite.
\item In Section 7 we will apply  results from Sections 5 and 6 to show $|\mathcal M_X|\in \{1,2^{\aleph_0}\}$. 
\end{itemize}
We suspect that most of these statements are also true in the context of homogeneity. In the Appendix at
the end of the paper, we will begin investigating meager composants of homogeneous continua, and pose
some questions for further research


\section{Definitions and basic notions}

 
 A continuum $X$ is \textbf{tree-like} if for every $\varepsilon>0$ there is an acyclic graph $T$ (a \textbf{tree}) and a mapping $f:X\to T$ such that $\mathrm{diam}(f^{-1}\{t\})<\varepsilon$ for each $t\in T$.   A \textbf{dendrite} is a locally connected continuum which contains no simple closed curve. Dendrites are tree-like \cite[Theorem 10.32]{nad}.
  
 A continuum $X$ is  \textbf{hereditarily unicoherent} if for every two subcontinua $A$ and $B$ the intersection $A\cap B$ is connected.  Tree-like continua are  hereditarily unicoherent \cite[p. 232]{nad}.

The \textbf{composant} of a point $x\in X$ is  defined to be the union of all proper subcontinua of $X$ that contain $x$. The \textbf{meager composant} of $x\in X$ is  the union of all nowhere dense subcontinua of $X$ that contain $x$.

A continuum $X$ is  \textbf{decomposable} if $X$ can be written as the union of two of its proper subcontinua; otherwise $X$ is  \textbf{indecomposable}. Equivalently, $X$ is indecomposable if every proper subcontinuum of $X$ is nowhere dense \cite[Exercise 6.19]{nad}.  In an indecomposable continuum, composants and meager composants  are the same and are not closed \cite[Proposition 11.14]{nad}. 

The decomposition $\mathcal M_X$ is \textbf{upper semi-continuous} if  for every closed $A\subset X$  the union $\bigcup\{M_x:x\in A\}$ is closed in $X$ \cite[Chapter III]{nad}. If $\mathcal M_X$ is upper semi-continuous, then it is a continuum in the quotient topology \cite[Theorem 3.10]{nad}.

\section{Ample propositions}

We begin by proving two very useful propositions which involve the notion of an ample subcontinuum.  A subcontinuum $A$ of a continuum $X$ is \textbf{ample} if for every neighborhood $U$ of $A$, there is a continuum $K$ such that $A\subset \interior{K}\subset K\subset U$ \cite[Definition 2]{pra}.

\begin{up}\label{a}Let $X$ be a continuum and let $K$ be a subcontinuum  $X$. If $K$ is not ample then there is a nowhere dense subcontinuum of $X$ meeting $K$ and $X\setminus K$.\end{up}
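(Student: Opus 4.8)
The plan is to argue directly, constructing the required nowhere dense continuum as a Hausdorff limit of subcontinua that are forced to lie in the boundary of a suitable component of a neighborhood of $K$. Suppose $K$ is not ample, and fix an open set $U$ with $K \subset U$ witnessing this, i.e.\ no subcontinuum $M$ of $X$ satisfies $K \subset \interior{M} \subset M \subset U$. Then $U \neq X$ (otherwise $M = X$ would witness ampleness), so we may also fix an open set $U_1$ with $K \subset U_1 \subset \overline{U_1} \subset U$. Let $C$ be the component of $\overline{U_1}$ containing $K$; being a component of a compact metric space, $C$ is a subcontinuum, and $C \subset \overline{U_1} \subset U$. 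The first step is to note that $K \not\subset \interior{C}$, for otherwise $C$ would satisfy $K \subset \interior{C} \subset C \subset U$, contrary to the choice of $U$.

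Next I would pass to a limit. Choose $x \in K \setminus \interior{C}$. Since $x \in U_1$ and $x \notin \interior{C}$, there is a sequence $y_n \to x$ with $y_n \in U_1 \setminus C$, and each $y_n$ lies in a component $D_n \neq C$ of $\overline{U_1}$, so $D_n \cap C = \emptyset$. By the boundary bumping theorem \cite{nad}, each $D_n$ meets the boundary $\overline{U_1} \setminus \interior{\overline{U_1}}$ of $\overline{U_1}$, a set contained in $\overline{U_1} \setminus U_1$; choose $w_n \in D_n \cap (\overline{U_1} \setminus U_1)$. Passing to a subsequence, assume $D_n$ converges in the Hausdorff metric to a subcontinuum $D$, and $w_n \to w$. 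Then $x, w \in D$; since $x \in K$ and $w \in \overline{U_1} \setminus U_1 \subset X \setminus K$, the continuum $D$ meets both $K$ and $X \setminus K$.

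It remains to see that $D$ is nowhere dense. Since $x \in C \cap D$, the set $C \cup D$ is connected; it is contained in $\overline{U_1}$ and contains the component $C$, so $C \cup D \subset C$, i.e.\ $D \subset C$. On the other hand each $D_n$ is disjoint from $C$, so $D_n \subset X \setminus C$, and hence $D \subset \overline{X \setminus C} = X \setminus \interior{C}$. Therefore $D \subset C \setminus \interior{C}$, the topological boundary of the closed set $C$, which has empty interior and so is nowhere dense in $X$; consequently $D$ is nowhere dense. Thus $D$ is the desired nowhere dense subcontinuum of $X$ meeting $K$ and $X \setminus K$.

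The step that needs the most care is the last one: the nowhere density of $D$ comes entirely from the ``squeeze'' $D \subset C$ (forced by maximality of the component $C$, once $D$ meets $C$ at $x$) together with $D \subset \overline{X \setminus C}$ (forced by the approximating continua $D_n$ avoiding $C$), and a subcontinuum contained in the boundary of a closed set cannot have nonempty interior. The other delicate point is making sure the limit $D$ really exits $K$ rather than collapsing to the single point $x$; this is exactly what the boundary bumping step guarantees, and it is why the construction is performed inside the slightly smaller set $\overline{U_1}$ while tracking the points $w_n \in \overline{U_1} \setminus U_1$.
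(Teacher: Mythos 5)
Your argument is correct: the deduction $K\not\subset\interior{C}$ from non-ampleness, the choice of $y_n\to x$ in $U_1\setminus C$, the boundary bumping of the components $D_n$, and the squeeze $D\subset C\cap\overline{X\setminus C}$ all hold, and a subcontinuum contained in the boundary of the closed set $C$ is indeed nowhere dense, so $D$ does the job. The skeleton matches the paper's proof (pass to the component of $K$ in a closed neighborhood, note $K$ is not in its interior, approximate a point of $K$ from outside the component, and bump each approximating component out to the boundary), but your construction of the nowhere dense continuum is genuinely different. The paper does not take a hyperspace limit: it separates its component $B$ of $K$ from the approximating points by a nested sequence of relatively clopen subsets of the neighborhood, so that the components $C_n$ form a discrete family, and then uses the remainder $H=\overline{\bigcup_{n}C_n}\setminus\bigcup_{n}C_n$, proving connectedness of $H$ by hand in a footnote; nowhere density there comes from $H$ being disjoint from every $C_n$ while every neighborhood of a point of $H$ meets some $C_n$. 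You instead invoke compactness of the hyperspace of subcontinua to extract a limit continuum $D$, getting connectedness for free and nowhere density from the maximality of the component $C$ (forcing $D\subset C$ once $x\in C\cap D$) combined with $D\subset\overline{X\setminus C}$. Your route is shorter and avoids both the clopen-separation step and the ad hoc connectedness argument, at the price of citing Hausdorff-limit facts the paper does not use. One cosmetic point: boundary bumping as usually cited gives that each component of $\overline{U_1}$ meets $\overline{U_1}\setminus U_1$, which is exactly what you use; your phrasing via the boundary of the closed set $\overline{U_1}$ is a true but unneeded strengthening.
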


\begin{proof} Suppose that $K$ is not ample. Then there is a compact neighborhood $U$ of $K$ such that if $B$ is the connected component of $K$ in $U$, then $K\not\subset \interior{B}$.  
Fix $x\in K\setminus \interior{B}$, and let $x_0\in U\setminus B$ such that $d(x_0,x)<1$. By compactness of $U$  there  is a relatively clopen subset $A_0 $ of $U$ such that $B\subset A_0$ and $x_0\notin A_0$ (cf.\ \cite[Theorem 6.1.23]{eng}). Likewise, assuming  $x_0,\ldots,x_{n-1}$ and clopen sets $A_0\supset \ldots\supset A_{n-1}$ have been defined,  there exists $x_{n}\in A_{n-1}\setminus B$ and a clopen $A_n\subset A_{n-1}$ such that   $d(x_n,x)<1/n$ and  $B\subset A_n\subset X\setminus\{x_n\}$. For each $n\geq 0$ let $C_n$ be the connected component of $x_n$ in $U$. Then $\{C_n:n\geq 0\}$ is a discrete collection of continua in $X$, and every $C_n$ meets $\partial U$ by the boundary bumping principle \cite[Theorem 5.4]{nad}. We conclude that $\textstyle H=\overline {\bigcup _{n=0} ^\infty C_n}\setminus \bigcup _{n=0} ^\infty C_n$ is a nowhere dense subcontinuum\footnote{To see that the set $H$  is connected, it suffices to let $V$ and $W$ be open subsets of $X$ such that $V\cap H\neq\varnothing$, $W\cap H\neq\varnothing$, and $H\subset V\cup W$,  and show  $V\cap W\neq\varnothing$. To that end, assume $x\in V$. Then there is an integer $N_1$ such that $x_n\in V$ for all $n\geq N_1$. By compactness of $X$ there exists $N_2$ such that $C_n\subset V\cup W$ for all $n\geq N_2$. Let $n\geq N_1+N_2$ such that $C_n\cap W\neq\varnothing$. Then $C_n\subset V\cup W$, $C_n\cap V\neq\varnothing$, and $C_n\cap W\neq\varnothing$. Since $C_n$ is connected we have $V\cap W\neq\varnothing$.} of $X$ which contains $x\in K$ and meets  $\partial U\subset X\setminus K$.  \end{proof}

\begin{up}\label{b}Let $M_x$ be a meager composant of a hereditarily unicoherent continuum $X$.  If $K\subset X$ is a continuum and $\overline{M_x\cap K}=K$, then every ample subcontinuum of $K$ intersects $M_x$.\end{up}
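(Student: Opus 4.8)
The plan is to reduce to one main case and prove it by contradiction via an explicit construction. First I would observe that if $A$ is not nowhere dense in $X$, then its $X$-interior $\interior{A}$ is a nonempty open subset of $X$ lying inside $K$, hence a nonempty open subset of $K$; since $M_x\cap K$ is dense in $K$ it meets $\interior{A}$, so $A\cap M_x\neq\varnothing$ and we are done. So I may assume $A$ is nowhere dense in $X$ and, toward a contradiction, that $A\cap M_x=\varnothing$. The goal is then to produce a single nowhere dense subcontinuum of $X$ running from $M_x$ into $A$.

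Using ampleness of $A$ in $K$, I would inductively choose subcontinua $L_1\supseteq L_2\supseteq\cdots$ of $K$ with $A$ in the $K$-interior of each $L_n$ and with $L_n$ contained in the $1/n$-ball about $A$: at stage $n$, apply the ampleness definition to the neighborhood of $A$ obtained by intersecting the $K$-interior of $L_{n-1}$ with that ball. Since $M_x\cap K$ is dense in $K$ and the $K$-interior of $L_n$ is a nonempty open subset of $K$, choose $y_n\in M_x$ lying in the $K$-interior of $L_n$; note $y_n,y_{n+1}\in L_n$ because $L_{n+1}\subseteq L_n$. As $y_n,y_{n+1}\in M_x$, there is a nowhere dense subcontinuum $E_n$ of $X$ containing both of them (for instance a union of two nowhere dense continua joining each of them to $x$), and $E_n\subseteq M_x$ since a nowhere dense subcontinuum meeting $M_x$ lies in $M_x$. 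By hereditary unicoherence, $F_n:=E_n\cap L_n$ is a subcontinuum of $X$; it is nowhere dense, is contained in $M_x$ and in the $1/n$-neighborhood of $A$, and it contains both $y_n$ and $y_{n+1}$.

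Then I would set $Q:=\overline{\bigcup_{n\ge 1}F_n}$. Consecutive $F_n$'s share the point $y_{n+1}$, so $\bigcup F_n$ is connected and $Q$ is a subcontinuum of $X$; it meets $M_x$ (it contains $\bigcup F_n\subseteq M_x$), and it meets $A$, because the compacta $\overline{\bigcup_{n\ge m}F_n}$ decrease with $m$, are nonempty, and lie within distance $1/m$ of $A$, so their intersection is a nonempty subset of $A$ contained in $Q$. The key point is that $Q$ is nowhere dense in $X$: if $V$ were a nonempty open subset of $X$ contained in $Q$, then for every $m$ the open set $V\cap\{z:d(z,A)>1/m\}$ would be contained in the closed nowhere dense set $\bigcup_{n<m}F_n$ (the remaining part of $Q$ being within $1/m$ of $A$), hence empty; so $V\subseteq\{z:d(z,A)\le 1/m\}$ for every $m$, i.e.\ $V\subseteq A$, contradicting nowhere-density of $A$. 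Hence $Q$ is a nowhere dense subcontinuum of $X$ meeting $M_x$, so $Q\subseteq M_x$, and then $\varnothing\neq Q\cap A\subseteq M_x\cap A$ — a contradiction.

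The step I expect to be the crux is verifying that $Q$ is nowhere dense: a countable union of nowhere dense continua is in general only meager, so this cannot be automatic, and the whole construction is arranged precisely so that the tail of $\bigcup F_n$ is trapped in shrinking neighborhoods of $A$. That is exactly where ampleness is used — to obtain the nested continuum-neighborhoods $L_n$ — and why the case ``$A$ not nowhere dense in $X$'' had to be split off; hereditary unicoherence enters only in forming the truncations $F_n=E_n\cap L_n$.
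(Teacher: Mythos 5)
Your proof is correct and follows essentially the same route as the paper's: both use ampleness to build a nested sequence of continuum-neighborhoods of $A$ in $K$, use density of $M_x\cap K$ to pick points of $M_x$ inside them, join consecutive points by nowhere dense continua and truncate via hereditary unicoherence so the tail is trapped near $A$, and then show the closure of the union is a nowhere dense continuum meeting both $M_x$ and $A$. The contradiction framing and the explicit $1/n$-ball bookkeeping (versus the paper's $\bigcap K_n=A$) are only cosmetic differences.
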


\begin{proof} Let $K\subset X$ be a continuum such that $M_x\cap K$ is dense in $K$. Let $A$ be an ample subcontinuum of $K$. We may assume $A$ is nowhere dense, otherwise it trivially intersects $M_x$. Let $K_0\supset K_1\supset\ldots$  be a decreasing sequence of continua in $K$ such that $A\subset K_n^{\mathrm{o}}$ (the interior in $K$) and $\bigcap_{n=0}^\infty K_n=A$.  For each $n\geq 0$ choose $x_n\in M_x\cap K_{n}\setminus A$ and let $L_n$ be a nowhere dense subcontinuum of $X$ containing $x_n$ and $x_{n+1}$.  By hereditary unicoherence of $X$, $L_n\cap K_n$ is connected.  It follows that  $L=\bigcup _{n=0}^\infty L_n\cap K_n$ is connected and $\overline L$ is a continuum.  Each compact neighborhood in $K\setminus A$  intersects only a finite number of the sets $L_n\cap K_n$. Together with the assumption that $A$ is nowhere dense, this implies that $\overline L$  is nowhere dense. Thus  $\overline L\subset M_x$.  Further, $\overline L\cap A\neq\varnothing$ because the sequence $(x_n)$ has an accumulation point in $A$.   Therefore $A\cap M_x\neq\varnothing$.\end{proof}

\section{Open  meager composants}

\begin{ut}\label{c}Let $X$ be a hereditarily unicoherent continuum. If $M_x$ contains a dense open subset of $X$, then $M_x=X$.\end{ut}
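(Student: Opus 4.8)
The plan is to argue by contradiction: assume $M_x$ contains a dense open set $U$ but $M_x\neq X$, and manufacture an \emph{ample} nowhere dense subcontinuum that lies in $X\setminus M_x$ yet, by Proposition~\ref{b}, must meet $M_x$. First I would record the consequences of the hypothesis. Since $U\subseteq M_x$ is dense, $\overline{M_x}=X$; hence Proposition~\ref{b} applied with $K=X$ says that every ample subcontinuum of $X$ meets $M_x$. Now fix $y\in X\setminus M_x$. Because $\mathcal{M}_X$ is a partition, the meager composant $M_y$ is disjoint from $M_x$, so $M_y\subseteq X\setminus M_x\subseteq X\setminus U$; as $X\setminus U$ is closed and nowhere dense, $\overline{M_y}$ is a nowhere dense continuum. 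In particular every nowhere dense subcontinuum of $X$ through $y$ is contained in $M_y$ (by definition of $M_y$) and so is disjoint from $M_x$.

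The core of the argument is to find an ample one among those subcontinua. Consider the family $\mathcal{A}$ of all nowhere dense subcontinua of $X$ that contain $y$, partially ordered by inclusion; it is nonempty, since $\{y\}\in\mathcal{A}$ (we may assume $X$ is nondegenerate, the hypothesis being vacuous otherwise). I would check that $\mathcal{A}$ is closed under unions of chains: the union of a chain from $\mathcal{A}$ is connected (all its members contain $y$), its closure is a continuum contained in the nowhere dense set $\overline{M_y}$, hence nowhere dense, hence again a member of $\mathcal{A}$. Zorn's lemma then yields a maximal element $A\in\mathcal{A}$. I claim $A$ is ample: otherwise Proposition~\ref{a} provides a nowhere dense subcontinuum $N$ meeting both $A$ and $X\setminus A$, and then $A\cup N$ is connected (the two sets meet), nowhere dense (a finite union of nowhere dense sets), contains $y$, and properly contains $A$ — contradicting maximality. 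Hence $A$ is an ample subcontinuum of $X$, so by Proposition~\ref{b} it meets $M_x$; but $A\subseteq M_y$ and $M_y\cap M_x=\varnothing$. This contradiction forces $M_x=X$.

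Most of the intermediate verifications are routine (connectedness of the unions involved, closure of ``nowhere dense'' under finite unions and under passage to a closed nowhere dense superset, and the degenerate one-point case). The step I expect to demand the most care is the maximality dichotomy — checking that the enlargement $A\cup N$ supplied by Proposition~\ref{a} is again a \emph{nowhere dense} subcontinuum through $y$, hence still in $\mathcal{A}$, so that it really does violate maximality. It is also worth flagging exactly where the strength of the hypothesis is used: mere density of $M_x$ suffices to invoke Proposition~\ref{b} with $K=X$, but to conclude that $\overline{M_y}$ (and therefore $A$) is nowhere dense one genuinely needs $X\setminus M_x$ to be nowhere dense, i.e.\ that $M_x$ contains a dense \emph{open} set rather than merely a dense set.
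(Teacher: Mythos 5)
Your argument is correct, and it runs on the same engine as the paper's proof — the ample/non-ample dichotomy, with Proposition~\ref{a} handling the non-ample case and Proposition~\ref{b} (applied with $K=X$, using only density of $M_x$) handling the ample case — but you reach that dichotomy by a different route. The paper works directly with the connected components $K$ of $X\setminus O$: each such $K$ is automatically a nowhere dense subcontinuum, its maximality as a component of the closed set $X\setminus O$ forces the continuum supplied by Proposition~\ref{a} to leave $X\setminus O$ and hence meet $O\subset M_x$, and in either case $K\subset M_x$, so $M_x=X$ follows directly rather than by contradiction. You instead fix $y\notin M_x$ and extract via Zorn's lemma a maximal nowhere dense subcontinuum $A$ through $y$; maximality kills the non-ample case, and ampleness plus Proposition~\ref{b} gives the contradiction. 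Both are sound. Your version needs the extra (correct) observations that $\overline{M_y}\subset X\setminus U$ is nowhere dense — which is exactly where the openness of $U$ enters, as you rightly flag; in the paper it enters by making the components of $X\setminus O$ nowhere dense — and that closures of unions of chains stay in your family $\mathcal A$, so the transfinite machinery is doing the work that ``being a component'' does for free in the paper. The paper's choice is a bit more economical and yields the slightly more informative conclusion that every component of $X\setminus O$ is absorbed into $M_x$; yours isolates a single maximal ``bad'' continuum, which is a perfectly good alternative.
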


\begin{proof} Suppose that $O\subset X$ is a dense open set and $O\subset M_x$.  Let $K$ be a connected component of  $X\setminus O$. Then $K$ is a nowhere dense subcontinuum of $X$.  If $K$ is not ample, then by Proposition \ref{a} its meager composant must intersect $O$ and we have  $K\subset M_x$. In the other case that $K$ is ample, $K\cap M_x\neq\varnothing$ by Proposition \ref{b}. Again $K\subset M_x$. This shows that each  connected component of $X\setminus O$ is contained in $M_x$. Hence $M_x=X$.\end{proof}

\begin{ut}[{{No proper open meager composant}}]\label{d}Let $X$ be a  hereditarily unicoherent continuum. If $M_x$ is open  then $M_x=X$. \end{ut}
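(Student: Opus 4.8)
The plan is to run the argument not in $X$ but inside the subcontinuum $Y:=\overline{M_x}$, in which $M_x$ is automatically dense; this is what makes Proposition \ref{b} available. Assume $M_x$ is open and, aiming for a contradiction, that $M_x\neq X$. Since $X$ is connected and $M_x$ is open, nonempty (it contains $x$), and proper, $M_x$ is not closed, so $\partial M_x:=\overline{M_x}\setminus M_x$ is nonempty; being the boundary of an open set, $\partial M_x$ is closed and nowhere dense in $X$. Let $Y=\overline{M_x}$: it is a subcontinuum of $X$ (closed in $X$), it contains $\partial M_x$, and $M_x$ is dense in it.

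First I would record the consequence of Proposition \ref{b} with $K=Y$ (for which $\overline{M_x\cap Y}=Y$): every ample subcontinuum of $Y$ intersects $M_x$. Let $E$ be a connected component of the compact set $\partial M_x$. Then $E$ is a subcontinuum of $Y$ disjoint from $M_x$, so $E$ is not ample in $Y$. Apply Proposition \ref{a} to the continuum $Y$ and its non-ample subcontinuum $E$: there is a subcontinuum $H$ of $Y$, nowhere dense in $Y$, meeting both $E$ and $Y\setminus E$ (note $Y\setminus E\neq\varnothing$ since it contains $M_x$). As $Y$ is closed in $X$, $H$ is closed in $X$, and a closed subset of $Y$ that has empty interior in $Y$ has empty interior in $X$; hence $H$ is a nowhere dense subcontinuum of $X$.

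Now the contradiction comes from a dichotomy on whether $H$ meets $M_x$. If $H\cap M_x\neq\varnothing$, choose $q\in H\cap M_x$; since $H$ is a nowhere dense subcontinuum of $X$ containing $q$, we get $H\subset M_q$, and $M_q=M_x$ because $q\in M_x$; this is impossible, as $H$ meets $E$ while $E\cap M_x=\varnothing$. If instead $H\cap M_x=\varnothing$, then $H\subset Y\setminus M_x=\partial M_x$, and being connected and meeting the component $E$ of $\partial M_x$, $H$ is contained in $E$, contradicting that $H$ meets $Y\setminus E$. Either way we reach a contradiction, so $M_x=X$.

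I expect the key idea to be the passage to $Y=\overline{M_x}$: in $X$ itself the complement $X\setminus M_x$ need not be nowhere dense and its components need not relate usefully to ampleness, so a naive imitation of the proof of Theorem \ref{c} stalls; inside $Y$ the situation is repaired because $M_x$ is dense there (unlocking Proposition \ref{b}), the boundary $\partial M_x$ is nowhere dense, and ``nowhere dense in $Y$'' upgrades to ``nowhere dense in $X$''. The one point requiring care is that $E$ must be an entire component of $\partial M_x$, since the second horn of the final dichotomy uses that a connected subset of $\partial M_x$ meeting $E$ lies inside $E$. (One could equally phrase the conclusion as $\overline{M_x}=X$ and then invoke Theorem \ref{c} for the dense open set $M_x$, but the argument above does not need Theorem \ref{c}.)
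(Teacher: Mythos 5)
Your proof is correct, and it shares the paper's key idea---passing to the subcontinuum $Y=\overline{M_x}$, where $M_x$ is dense and open---but it is organized differently. The paper's proof of this theorem is a two-line reduction: it observes that $M_x$ is a dense open \emph{meager composant of the continuum} $\overline{M_x}$, invokes Theorem \ref{c} inside $\overline{M_x}$ to get $M_x=\overline{M_x}$, and finishes by connectedness of $X$. You instead bypass Theorem \ref{c} entirely and re-run the ample machinery by hand: Proposition \ref{b} with $K=Y$ shows a component $E$ of $\partial M_x$ is not ample in $Y$, Proposition \ref{a} (with ambient continuum $Y$) produces a nowhere dense subcontinuum $H$ of $Y$ crossing out of $E$, and your dichotomy on whether $H$ meets $M_x$ yields the contradiction; your observation that ``closed and nowhere dense in $Y$'' upgrades to ``nowhere dense in $X$'' is exactly what makes the first horn work. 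What your route buys is two small gains in rigor: you never need to check that the meager composant of $x$ computed \emph{in} the subcontinuum $\overline{M_x}$ coincides with $M_x$ (a point the paper's reduction glosses over, and which requires the relative/absolute nowhere-density comparison you only use in its easy direction), and you avoid the step in the proof of Theorem \ref{c} where one must argue that the meager composant of a complementary component actually reaches the dense open set. What the paper's route buys is brevity and reuse: Theorem \ref{c} already packages the component-by-component argument, whereas you duplicate a localized version of it; also note your final dichotomy (ample vs.\ not ample is replaced by ``$H$ meets $M_x$ or not'') is a slight rearrangement of the case split in Theorem \ref{c}, not a new phenomenon. Either way, every step of your argument checks out.
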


\begin{proof} Suppose that $M_x$ is open. Then $M_x$ is a dense open meager composant of the hereditarily unicoherent continuum $\overline{M_x}$. By  Theorem \ref{c},  $M_x=\overline {M_x}$. Thus $M_x$ is a clopen subset of $X$. Since $X$ is connected, this implies $M_x=X$.\end{proof}

\begin{uc}\label{e}No tree-like continuum has a proper open meager composant.\end{uc}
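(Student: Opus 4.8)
The plan is to deduce this directly from Theorem \ref{d}. Recall from Section 2 that every tree-like continuum is hereditarily unicoherent \cite[p.\ 232]{nad}. So I would take $X$ to be tree-like and suppose, toward a contradiction, that $X$ has a proper open meager composant $M_x$. Since $X$ is hereditarily unicoherent and $M_x$ is open, Theorem \ref{d} forces $M_x = X$, contradicting properness. Hence no such $M_x$ exists, which is the assertion.

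In other words, there is essentially nothing left to do: the corollary is the specialization of Theorem \ref{d} to the subclass of hereditarily unicoherent continua that happen to be tree-like, and the only input beyond Theorem \ref{d} is the standard fact that tree-likeness implies hereditary unicoherence. Consequently I do not expect a genuine obstacle here; the difficulty has already been absorbed into Theorem \ref{c} (the reduction to the connected components of the complement of a dense open set, handled by the ample/non-ample dichotomy via Propositions \ref{a} and \ref{b}) and into Theorem \ref{d} (the passage from ``$M_x$ open in $X$'' to ``$M_x$ dense and open in $\overline{M_x}$'', followed by a clopen-in-a-connected-space argument).

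If one preferred not to cite \cite{nad} for the implication, one could argue it by hand: given subcontinua $A,B$ of a tree-like $X$ with $A\cap B$ disconnected, separate two points of $A\cap B$ in $X$, choose $\varepsilon$ small enough that some $\varepsilon$-map $f\colon X\to T$ onto a tree still separates their images, and observe that $f(A)$ and $f(B)$ would then witness a failure of unicoherence in $T$, which is impossible since trees are hereditarily unicoherent. But invoking the quoted fact is the cleaner route, and that is what I would write.
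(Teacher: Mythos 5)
Your derivation is exactly the paper's intended one: Corollary \ref{e} is an immediate consequence of Theorem \ref{d} together with the standard fact that tree-like continua are hereditarily unicoherent, which is why the paper states it without proof. Nothing is missing.
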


\begin{ue}Theorem 3 cannot be  improved along the lines of:   \begin{center}\textit{If $U\subset M_x$ is open  then  $\overline U\subset M_x$.}\end{center} To see that this statement is false, consider the tree-like continuum  $X\subset [0,2]\times [0,1]$  that is depicted in Figure 1. The set $I=X\cap [0,1]^2$ is an indecomposable continuum  with endpoints $\langle 0,0\rangle$ and $\langle 1,1\rangle$ which belong to different composants of $I$.  The meager composant of $\langle 0,0\rangle$  in $X$ contains the open set $U=X\cap (1,2)\times (0,1)$. But $\overline U$ contains  $[1,2]\times \{1\}$ which belongs to a different meager composant of $X$.\end{ue}

\begin{figure}
\centering
\includegraphics[scale=0.37]{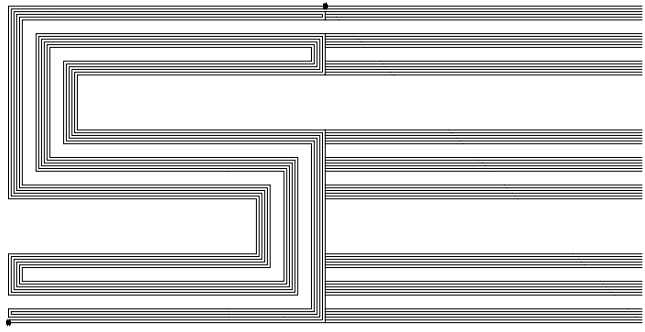}
\caption{The continuum $X$ for Example 1}
\end{figure}

\section{Non-closed meager composants}

In \cite[Section 4]{mou} it was shown that a hereditarily unicoherent continuum with a non-closed meager composant must contain an indecomposable continuum. 
 In this section we will prove the stronger statement that every such continuum has an  indecomposable subcontinuum with interior. The proof will involve the concept of  irreducibility.

 If $X$ is a continuum and $a,b\in X$ then $X$ is \textbf{irreducible between $a$ and $b$} if no proper subcontinuum of $X$ contains both $a$ and $b$.  In this case we define
\begin{align*}
A&=\big\{x\in X:X\text{ is irreducible between }x\text{ and } b\big\}\\
B&=\big\{x\in X:X\text{ is irreducible between }a\text{ and } x\big\}.
\end{align*}
By \cite[Theorem 11.4]{nad},  $A$ and $B$ are connected. Hence $\overline A$ and $\overline B$ are continua.

\begin{ul}\label{f}Let $X$ be a continuum that is irreducible between $a$ and $b$.  If  $\overline A$ has interior, then $\overline A$ is indecomposable (and likewise for $\overline B$).\end{ul}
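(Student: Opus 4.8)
The plan is to argue by contradiction, using the structure of irreducible continua together with hereditary unicoherence. Suppose $\overline A$ has interior but is decomposable, say $\overline A = P \cup Q$ with $P, Q$ proper subcontinua of $\overline A$. The guiding intuition is that $A$ consists of points at which $X$ is ``irreducibly attached'' to $b$, so a point $x \in A$ cannot lie in any proper subcontinuum of $X$ together with $b$; meanwhile $b \notin \overline A$ (otherwise $X$ itself would be the only continuum from $b$ to $b$, i.e.\ $A = X$, but then $\overline A = X$ is irreducible — or this degenerate case is handled separately). So first I would pin down the basic facts: $A$ is dense in $\overline A$ (indeed $A$ is connected by \cite[Theorem 11.4]{nad} and $\overline A$ is its closure), $b \notin \overline A$, and every point of $A$ is an irreducibility point toward $b$.

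Next I would exploit the decomposition. Since $\overline A = P \cup Q$ and $A$ is dense, at least one of $P \cap A$, $Q \cap A$ is dense in the corresponding piece; more usefully, some point $x \in A$ lies in $\overline A \setminus Q = \interior{P}$ relative to $\overline A$ — wait, more carefully: pick $x \in A$ belonging to $P$ but not $Q$ (possible since $Q \ne \overline A$ and $A$ is dense in $\overline A$, so $A \not\subset Q$). Now I want to build a proper subcontinuum of $X$ containing both $x$ and $b$, contradicting $x \in A$. The continuum $\overline B$ (or rather the ``other side'' of the irreducible continuum) should do most of the work: by the theory of irreducible continua, one expects $X = \overline{A} \cup \overline{B}$ or at least that there is a subcontinuum $Y \subsetneq X$ with $b \in Y$ and $Y \cup Q = X$ or similar, so that $x \in P \subset$ something proper. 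The key leverage is hereditary unicoherence: if $C$ is any subcontinuum of $X$ containing $b$, then $C \cap \overline A$ is a subcontinuum of $\overline A$; if it were all of $\overline A$ then $C \supset \overline A \ni x$ and $C \ni b$ would force $C = X$ by irreducibility, so $C \cap \overline A$ is a proper subcontinuum of $\overline A$ for every proper $C \ni b$. Running this with $C$ chosen so that $C \cap \overline A$ covers the interior of $\overline A$ should contradict $\overline A$ having interior.

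Concretely, here is the mechanism I expect to use. Let $W = \interior{\overline A}$, a nonempty open subset of $X$. Take any proper subcontinuum $C$ of $X$ with $b \in C$ that is ``large'' — for instance, using the boundary bumping theorem, for each $a' \in A$ near the boundary of $W$, there is no proper continuum joining $a'$ to $b$; but there ARE proper continua joining points of $\overline A \setminus A$, or points outside $\overline A$, to $b$. The decomposability $\overline A = P \cup Q$ gives that $P$ and $Q$ each have interior in $\overline A$ (a proper subcontinuum of a continuum with nonempty interior in the ambient space, whose union with another is the whole space — standard: if $P$ had empty interior in $\overline A$ then $Q = \overline A$). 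So both $P$ and $Q$ meet $A$ in a set dense-in-itself and in particular $A \cap \interior_{\overline A}(P) \ne \varnothing$ and similarly for $Q$. Pick $x_P \in A \cap \interior_{\overline A}(P)$. Then $x_P \notin Q$. I claim $Q \cup (\text{a continuum joining }Q\text{ to }b)$ is a proper subcontinuum of $X$ missing $x_P$ — and this is where I expect the real fight.

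The main obstacle is precisely establishing that there exists a proper subcontinuum of $X$ that contains $b$ and contains $Q$ but misses $x_P$; equivalently, controlling how $X \setminus \overline A$ attaches to $\overline A$. This should follow from hereditary unicoherence plus a careful application of the structure theorem for irreducible continua (the ``layers''/tranches decomposition), or alternatively by a direct boundary-bumping argument: the component $R$ of $X \setminus W$ containing $b$ is a continuum meeting $\overline A$ in a connected set (unicoherence) that, being contained in $\overline A \setminus W = \partial_X(\overline A) \cap \overline A$, cannot contain $x_P$; then $R \cup Q$ is a proper subcontinuum (it misses $x_P \in W$, and $R$ is proper since it misses points of $A$ in $W$) containing both $b$ and $x_P$ — contradiction, since $x_P \in A$. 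I would need to check that $R \cup Q$ is connected, which requires $R \cap Q \ne \varnothing$; if not, one replaces $Q$ by $P$ and $x_P$ by a corresponding $x_Q \in A \cap \interior_{\overline A}(Q)$, and uses that $R$, meeting the connected set $\overline A$ only in its boundary part, must meet $P \cup Q = \overline A$ hence one of them — choosing the labels so that it meets the one not containing the chosen $A$-point. Sorting out this bookkeeping cleanly, and verifying $R$ is indeed a proper subcontinuum, is the crux; everything else is routine point-set topology.
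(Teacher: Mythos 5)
Your mechanism stalls at exactly the point where the paper's proof does its real work, and the missing ingredient is not ``bookkeeping.'' Writing $W=\inter_X\overline{A}$ and letting $R$ be the component of $b$ in $X\setminus W$, your properness argument for $R\cup Q$ requires a point of $W$ outside $Q$ (and in particular your chosen $x_P\in A\setminus Q$ must lie in $W$, which your choice does not guarantee). Nothing you prove rules out $W\subset Q$; and if $W\subset Q$ then $R\cup Q\supset(X\setminus W)\cup W=X$, so your candidate continuum is not proper at all, and switching to $P$ can fail for the same reason. What is needed is that $W$ is dense in $\overline{A}$, equivalently $A\subset\inter_X\overline{A}$ --- and that is precisely the nontrivial step in the paper: in the decomposable case $C=X\setminus\overline{A}$ is connected and contains $b$, so $\overline{C}$ is a \emph{proper} subcontinuum containing $b$ (proper because $\overline{A}$ has interior), hence $\overline{C}\cap A=\varnothing$ by irreducibility, which gives $A\subset X\setminus\overline{C}\subset\inter_X\overline{A}$. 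With that density in hand, the paper's $K\cup\overline{C}$ (your $R\cup Q$, with $\overline{C}$ in place of $R$) misses a nonempty open subset of $H\setminus K$ and yields the contradiction. You identified this as ``the crux'' but supplied no way to obtain it, so the proof is incomplete at its essential step.

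Two further defects. First, the case $b\in\overline{A}$ is handled with an incorrect parenthetical: $b\in\overline{A}$ does not mean $b\in A$, and even $\overline{A}=X$ does not by itself give indecomposability (irreducible continua can be decomposable --- an arc is). The correct dichotomy, as in the paper, is: if $X$ is indecomposable then $\overline{A}=X$ and we are done; if $X=P\cup Q$ is decomposable with $b\in Q\setminus P$, then $A\subset P$, so $b\notin\overline{A}$ --- you need this argument before forming $R$. Second, the lemma carries no hereditary-unicoherence hypothesis; the paper's proof uses none, and your appeal to it (connectedness of $R\cap\overline{A}$) is both unlicensed by the statement and in fact inessential to your argument. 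Finally, your concluding sentence asserts that $R\cup Q$ both misses and contains $x_P$; the intended contradiction should instead come from $R\cup Q$ being a proper subcontinuum containing $b$ together with a point of $A\cap Q$ (such a point exists since $A$ is dense in $\overline{A}$ and $A\subset P$ would force $\overline{A}\subset P$).
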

 
 \begin{proof} Suppose that $\overline A$ has interior in $X$.  If $X$ is indecomposable then $\overline A=X$ and we are done. Assume now that  $X$ is decomposable. Then $C=X\setminus \overline A$ is connected and $b\in C$.  It follows that $\overline A$ is equal to the closure of its interior, otherwise $\overline C$ would be a proper subcontinuum of $X$ intersecting $A$ and containing $b$. We are now ready to show that $\overline A$ is indecomposable. For a contradiction suppose that  $\overline A=H\cup K$ where $H$ and $K$ are proper subcontinua of $\overline A$.  Without loss of generality  $K\cap \overline C\neq\varnothing$. Since $\overline A$ is the closure of its interior,  $H\setminus K$ has interior in $X$. Therefore $K\cup \overline C$ is a proper subcontinuum of $X$. It  intersects $A$ and contains $b$, which is a contradiction. Therefore $\overline A$ is indecomposable. \end{proof}

A standard consequence of Zorn's lemma is that for every two points $a$ and $b$ in a continuum $X$ there exists a subcontinuum of $X$ that is irreducible between $a$ and $b$ \cite[Exercise 4.35]{nad}. This fact will be used in the proof below.
 
\begin{ut}\label{g}Let $X$ be a hereditarily unicoherent continuum. If $M_x$ is not closed, then $\overline{M_x}$ contains an indecomposable continuum with interior in $X$. \end{ut}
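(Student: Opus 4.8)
The plan is to start from a point $y \in \overline{M_x} \setminus M_x$ and build a subcontinuum of $\overline{M_x}$ which is irreducible between $x$ and $y$, then apply Lemma \ref{f} to one of its two ``irreducibility kernels.'' First I would pick $y \in \overline{M_x}\setminus M_x$ and invoke the Zorn's lemma fact to get a subcontinuum $Y \subset \overline{M_x}$ irreducible between $x$ and $y$. The key observation is that $M_x \cap Y$ is dense in $Y$: any point of $Y$ is a limit of points of $M_x$ (since $Y \subset \overline{M_x}$ and these points are joined to $x$ by nowhere dense subcontinua), but one must be slightly careful because $M_y$ is defined relative to $X$, not $Y$ — I would argue instead that, since $Y \subset \overline{M_x}$, every nonempty open subset of $Y$ meets $M_x$, which is exactly density of $M_x \cap Y$ in $Y$.

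Next, set up the irreducibility structure for $Y$: let $A = \{z \in Y : Y \text{ is irreducible between } z \text{ and } y\}$ and $B$ symmetrically, so that $\overline A, \overline B$ are continua by \cite[Theorem 11.4]{nad}. The main point will be to show that one of $\overline A$, $\overline B$ has interior in $X$; then Lemma \ref{f} forces it to be indecomposable, and it is a subcontinuum of $Y \subset \overline{M_x}$, giving the conclusion. I expect this — showing one of the kernels is somewhere dense — to be the heart of the argument and the main obstacle. The strategy here is a proof by contradiction: suppose both $\overline A$ and $\overline B$ are nowhere dense in $X$. Then I want to derive that $y \in M_x$, contradicting the choice of $y$. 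Since $M_x \cap Y$ is dense in $Y$, and $\overline A$ is a nowhere dense ample (indeed, since $Y$ is irreducible, $\overline A$ is ample in $Y$ — or one can avoid ampleness by a more direct layering argument) subcontinuum of $Y$, Proposition \ref{b} applies, so $\overline A \cap M_x \neq \varnothing$; similarly $\overline B \cap M_x \neq \varnothing$. The issue is that this alone does not put $y$ into $M_x$, since $y$ need not lie in $\overline A \cup \overline B$ and $Y$ itself might not be nowhere dense.

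To close the gap I would instead work with a layered decomposition: fix a sequence of points $y_n \in M_x$ converging to $y$, join consecutive $y_n$ by nowhere dense subcontinua $L_n$, and use hereditary unicoherence of $X$ together with the assumption that $\overline A, \overline B$ (and hence the ``ends'' of $Y$) are nowhere dense to show that $\bigcup_n L_n$, suitably trimmed, has nowhere dense closure; more precisely, I would show that under the standing assumption every subcontinuum of $Y$ avoiding a fixed point is nowhere dense in $X$, which via the telescoping construction of Proposition \ref{b} yields a nowhere dense subcontinuum of $X$ containing $x$ and accumulating at $y$ — forcing $y \in M_x$. The delicate step is controlling where the $L_n$ live inside $X$ so that their union stays nowhere dense; I anticipate borrowing the ``finite-intersection with compact neighborhoods'' device from the proofs of Propositions \ref{a} and \ref{b}. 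Once the contradiction is reached, one of $\overline A$, $\overline B$ has interior, Lemma \ref{f} finishes it, and since that kernel sits inside $Y \subseteq \overline{M_x}$, the theorem follows.
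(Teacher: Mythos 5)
Your opening moves (pick $y\in\overline{M_x}\setminus M_x$, take $Y\subset\overline{M_x}$ irreducible between $x$ and $y$, aim to show an irreducibility kernel has interior and invoke Lemma \ref{f}) match the paper's strategy, but the two steps you flag as the heart of the argument both have genuine gaps. First, the density claim: from $Y\subset\overline{M_x}$ you conclude that every nonempty relatively open subset of $Y$ meets $M_x$, i.e.\ that $M_x\cap Y$ is dense in $Y$. This does not follow: a point of $Y$ is a limit of points of $M_x$ \emph{in $X$}, but those points need not lie in $Y$, so a relatively open set $V\cap Y$ can miss $M_x$ entirely. The only correct version of this observation is that every nonempty open subset of $X$ \emph{contained in} $K$ meets $M_x$ (which the paper uses exactly once, to pick the auxiliary point $a'$); the paper never claims, and never needs, density of $M_x\cap K$ in $K$. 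Instead it builds a second irreducible continuum $K'$ between $a'$ and $b$ out of components $C_n$ that avoid $M_x$ by irreducibility, shows those $C_n$ are nowhere dense (because $K\subset\overline{M_x}$) hence lie in $M_b$, and thus gets density of $M_b\cap K'$ in $K'$ --- the \emph{other} composant. This pivot is the key idea your proposal is missing.

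Second, your contradiction mechanism is unsupported at the decisive point. Assuming both $\overline A$ and $\overline B$ are nowhere dense, you want to force $y\in M_x$ via the auxiliary claim that ``every subcontinuum of $Y$ avoiding a fixed point is nowhere dense in $X$.'' But $Y$ itself must have interior in $X$ (otherwise $y\in M_x$ immediately), so whenever $Y$ is decomposable one of the two pieces of a decomposition has interior in $X$ and misses $x$ or $y$; the claim therefore cannot be taken for granted and no argument is offered. The layering construction you sketch runs into exactly the phenomenon that makes the theorem nontrivial: a union of nowhere dense continua $L_n$ can have closure with interior (this is precisely what happens inside an indecomposable continuum with interior, which is what the theorem is trying to detect). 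Finally, your parenthetical ``since $Y$ is irreducible, $\overline A$ is ample in $Y$'' is not a known fact and is not a consequence of irreducibility; the paper's proof splits into the cases $\overline{A'}$ ample / not ample in $K'$ for exactly this reason, handling the non-ample case with Proposition \ref{a} (producing a nowhere dense continuum $M$ and a proper subcontinuum $P\ni b$ so that $K=L\cup\overline{A'}\cup M\cup P$ forces $\overline{A'}$ to have interior) and the ample case with Proposition \ref{b} applied to $M_b$. As written, your proposal would need both the density step and the contradiction step replaced by substantively new arguments.
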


\begin{proof} Suppose that $M_x$ is not closed. Let $K$ be a subcontinuum of $\overline{M_x}$ that is irreducible between $a\in M_x$ and  $b\in \overline{M_x}\setminus M_x$. Then $K$ has interior in $X$. And by Lemma \ref{f} may assume that $A$ and $B$ (the sets of irreducibility for $K$) are nowhere dense. Let $U$ be a non-empty open subset of $X$ that is contained in $K\setminus (\overline A\cup \overline B)$. Fix $a'\in U\cap M_x$, and let $L$ be a nowhere dense subcontinuum of $X$ with  $\{a,a'\}\subset L$.  Let $\delta=d(a',\overline B)$. For each $n\geq 1$ let $C_n$ be the connected component of $B$ in the set 
 $\{y\in K:d(a',y)\geq \delta/n\}$.  Note that $d(a',C_n)=\delta/n$ by the boundary bumping principle \cite[Theorem 5.4]{nad}.  Let $K'= \overline{\bigcup_{n=1}^\infty C_n}$. Then $K'$ is a continuum and $\{a',b\}\subset K'$.   By irreducibility of $K$ we have $L\cup K'=K$ and each $C_n$ misses $L$. It follows that $K'$ is irreducible between $a'$ and $b$. Let $$A'=\big\{y\in K':K'\text{ is irreducible between }y\text{ and } b\big\}.$$ We claim that the continuum $\overline{A'}$ has interior in $X$. This will be proved in two cases.

\begin{enumerate}[leftmargin=50pt]
\item[\underline{Case 1}:] $\overline{A'}$ is not ample in $K'$. Then by Proposition \ref{a} there is a nowhere dense continuum $M$ which intersects  $\overline{A'}$ and contains a point $x'\in K'\setminus \overline{A'}$. Let $P$ be a proper subcontinuum  of $K'$ that contains $x'$ and $b$. The open set $U\setminus P$ is non-empty as it contains $a'$, and by irreducibility of $K$  we have $K=L\cup \overline{A'} \cup M\cup P$.   Since $L$ and $M$ are nowhere dense, this means that $\overline{A'}$ has interior in $X$. 

\smallskip

\item[\underline{Case 2}:] $\overline{A'}$ is  ample in $K'$. Note that $C_n\cap M_x=\varnothing$ for each $n$ by irreducibility of $K$. So $C_n$ is nowhere dense in $X$ and thus $C_n\subset M_b$.  Therefore $M_b\cap K'$ is dense in $K'$. By Proposition \ref{b}, $\overline{A'}$ intersects $M_b$. Recall that  $\overline{A'}$ also contains $a'\in M_a$ and $M_a\neq M_b$. Therefore $\overline{A'}$ has interior in $X$. 
\end{enumerate}

By the preceding claim and Lemma \ref{f},  $\overline{A'}$ is an indecomposable continuum with interior in $X$. 
\end{proof}

We now establish the converse of Theorem \ref{g}.

\begin{ul}\label{h}Let $X$ be a hereditarily unicoherent continuum.  If $K$ is an indecomposable subcontinuum of $X$ with interior and $M_x\cap K\neq\varnothing$, then $M_x\cap K$ is a composant of $K$. In particular, $M_x$ is not closed.\end{ul}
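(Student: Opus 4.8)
The plan is to show that $M_x \cap K$ is exactly the composant $\kappa(p)$ of $K$ determined by any point $p \in M_x \cap K$, by proving inclusion in both directions. First I would fix $p \in M_x \cap K$ and let $\kappa$ denote the composant of $p$ in $K$. For the inclusion $M_x \cap K \subset \kappa$: take any $q \in M_x \cap K$; then there is a nowhere dense subcontinuum $L$ of $X$ containing both $p$ and $q$. Since $K$ has interior in $X$, the continuum $L$ cannot contain $K$ (else $L$ would have interior, contradicting nowhere density), so $L \cap K$ is a proper subcontinuum of $K$; by hereditary unicoherence $L \cap K$ is connected, hence it is a proper subcontinuum of $K$ containing $p$ and $q$, which puts $q \in \kappa$.

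For the reverse inclusion $\kappa \subset M_x \cap K$: take $q \in \kappa$, so there is a proper subcontinuum $P$ of $K$ with $\{p,q\} \subset P$. Since $K$ is indecomposable, every proper subcontinuum of $K$ is nowhere dense \emph{in $K$}, and because $K$ is itself nowhere dense in $X$ would be false here — $K$ has interior — I need to be a little careful: $P$ is nowhere dense in $K$, and $K$ has interior, so $P$ could conceivably have interior in $X$. But $P$ is a proper subcontinuum of the indecomposable continuum $K$, hence nowhere dense in $K$; since $K$ is closed, a set nowhere dense in $K$ is nowhere dense in $X$ as well (its closure, which lies in $K$, has empty interior relative to $K$ and therefore empty interior in $X$). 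Thus $P$ is a nowhere dense subcontinuum of $X$ containing $p \in M_x$, so $P \subset M_x$, giving $q \in M_x \cap K$. Combining the two inclusions yields $M_x \cap K = \kappa$.

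Finally, for the ``in particular'' clause: composants of an indecomposable continuum are dense and proper, hence never closed (this is the cited \cite[Proposition 11.14]{nad}), so $M_x \cap K = \kappa$ is a dense proper subset of $K$ and in particular is not closed. Since $K$ is closed in $X$, if $M_x$ were closed then $M_x \cap K$ would be closed in $K$, a contradiction; therefore $M_x$ is not closed.

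I expect the only real subtlety — the ``main obstacle'' — to be the bookkeeping around the word ``nowhere dense'': one must consistently distinguish nowhere density in $K$ from nowhere density in $X$, and use that $K$ is closed (and that a proper subcontinuum of an indecomposable continuum is nowhere dense \emph{in that continuum}) to transfer the property from $K$ to $X$. Everything else is a direct application of hereditary unicoherence and the definition of meager composant, with no estimates or constructions required.
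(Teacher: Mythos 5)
Your proof is correct and follows essentially the same route as the paper: both inclusions are obtained exactly as in the paper's argument, using hereditary unicoherence to make $L\cap K$ a proper subcontinuum of $K$ (so $M_x\cap K\subset\kappa$), and the fact that proper subcontinua of the indecomposable $K$ are nowhere dense in $K$, hence in $X$ since $K$ is closed (so $\kappa\subset M_x$), finishing with the non-closedness of composants. Your explicit bookkeeping about ``nowhere dense in $K$'' versus ``nowhere dense in $X$'' just spells out what the paper compresses into the phrase ``$\kappa$ is a meager composant of $K$ and hence $\kappa\subset M_x$.''
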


\begin{proof} Suppose that $K$ is an indecomposable subcontinuum of $X$ with interior, and $y\in  M_x\cap K$. Let $\kappa$ be the composant  of $y$ in $K$. Then $\kappa$ is a meager composant of $K$ and hence  $\kappa\subset M_x$. On the other hand, if $L$ is any subcontinuum of $X$ meeting two different composants of $K$, then since $L\cap K$ is connected we have $K\subset L$ and thus $L$ has interior. This shows that $M_x\cap K\subset \kappa$. Therefore $M_x\cap K=\kappa$. Since $\kappa$ is not closed in $K$, we conclude that $M_x$ is not closed. \end{proof}

\begin{ut}[{{Characterization}}]\label{i}If $X$ is a hereditarily unicoherent continuum, then every meager composant of $X$ is closed iff every indecomposable subcontinuum of $X$ is nowhere dense. \end{ut}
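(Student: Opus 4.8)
The plan is to derive Theorem~\ref{i} directly from the two results that precede it. The statement is an ``iff'', so I would prove the two implications separately, and in each direction I would argue by contraposition, since both Theorem~\ref{g} and Lemma~\ref{h} are already phrased as implications between ``$M_x$ is not closed'' and ``there is an indecomposable subcontinuum with interior''.

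For the forward direction, suppose some indecomposable subcontinuum $K$ of $X$ is not nowhere dense; then $K$ has interior in $X$. Pick any point $y\in K$ and consider its meager composant $M_y$. Since $y\in M_y\cap K$, this intersection is nonempty, so Lemma~\ref{h} applies and tells us that $M_y$ is not closed. Hence not every meager composant of $X$ is closed, which is the contrapositive of what we want. For the reverse direction, suppose some meager composant $M_x$ of $X$ is not closed. By Theorem~\ref{g}, $\overline{M_x}$ contains an indecomposable continuum with interior in $X$; in particular $X$ has an indecomposable subcontinuum that is not nowhere dense, again the contrapositive of the desired conclusion.

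I do not anticipate a genuine obstacle here: the theorem is essentially a bookkeeping statement combining Theorem~\ref{g} and Lemma~\ref{h}, and the only thing to be slightly careful about is making sure the quantifiers line up — ``every meager composant is closed'' fails as soon as one is not closed, and ``every indecomposable subcontinuum is nowhere dense'' fails as soon as one has interior (noting that for a subcontinuum of $X$, having interior in $X$ is the same as not being nowhere dense in $X$, since a subcontinuum is closed). If anything needs a word of justification it is that last equivalence, which I would state explicitly in passing. So the proof is a short two-paragraph argument citing Lemma~\ref{h} for one direction and Theorem~\ref{g} for the other.
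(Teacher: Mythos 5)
Your proposal is correct and is essentially identical to the paper's proof: both directions are handled by contraposition, citing Theorem~\ref{g} for ``some $M_x$ not closed $\Rightarrow$ indecomposable subcontinuum with interior'' and Lemma~\ref{h} for the converse direction. Nothing further is needed.
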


\begin{proof} $(\Leftarrow)$: Suppose that $X$ has a non-closed meager composant. Then by Theorem \ref{g}, $X$ has an indecomposable subcontinuum with interior. 

 $(\Rightarrow)$: Suppose that $X$ has an indecomposable subcontinuum $K$ with interior. Let $x\in K$. By Lemma \ref{h}, $M_x$ is not closed.\end{proof}

\begin{ue}Hereditary unicoherence is critical  to each implication in Theorem \ref{i}. The  continuum featured in \cite[Section 5]{mou} is hereditarily decomposable but its meager composants are not closed. And it is easy to construct a continuum which has an indecomposable subcontinuum with interior and only $1$ (closed) meager composant.\end{ue}

\section{Closed meager composants}

 \begin{up}\label{j}Every closed meager composant of a continuum is ample.
 \end{up}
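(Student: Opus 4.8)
The plan is to deduce this almost immediately from Proposition \ref{a}, once we observe that a closed meager composant is a subcontinuum. First I would note that for any $x$ the set $M_x$ is connected, being a union of subcontinua of $X$ all of which contain the point $x$. Hence if $M_x$ is closed it is a subcontinuum of $X$ (the degenerate case $X=\{x\}$, or more generally $M_x=X$, is trivially ample, so we may assume $M_x$ is a proper subcontinuum). This is the only point requiring any care, and it is where Proposition \ref{a} becomes applicable.

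Next I would argue by contradiction: suppose $M_x$ is a closed but non-ample subcontinuum of $X$. By Proposition \ref{a} there is a nowhere dense subcontinuum $L$ of $X$ that meets both $M_x$ and $X\setminus M_x$. Pick a point $p\in L\cap M_x$. Since $L$ is a nowhere dense subcontinuum of $X$ containing $p$, the definition of meager composant gives $L\subset M_p$; and because $\mathcal M_X$ partitions $X$ and $p\in M_x$, we have $M_p=M_x$. Therefore $L\subset M_x$, contradicting the fact that $L$ meets $X\setminus M_x$. Hence $M_x$ must be ample.

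There is essentially no serious obstacle here — the content has already been packaged into Proposition \ref{a}. The one thing to be careful about is phrasing the reduction cleanly: making explicit that closedness upgrades the connected set $M_x$ to a subcontinuum, so that Proposition \ref{a} applies verbatim, and handling the trivial case $M_x=X$ separately so that "proper subcontinuum" is not needed in the hypothesis of that proposition.
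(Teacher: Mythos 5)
Your proof is correct and follows the same route as the paper, which simply applies Proposition \ref{a} to the closed (hence compact, connected) set $M_x$; your write-up just makes explicit the contradiction $L\subset M_p=M_x$ that the paper leaves implicit. No issues.
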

 
 \begin{proof} If $M_x$ is closed then apply Proposition \ref{a} to the continuum $M_x\subset X$.\end{proof}

\begin{ul}\label{k}Let $X$ be a hereditarily unicoherent continuum. If every meager composant of $X$ is closed, then the equivalence relation $E=\big\{\langle x,y\rangle\in X\times X: y\in M_x\big\}$ is closed in $X\times X$.\end{ul}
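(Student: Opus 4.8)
The plan is to prove directly that $E$ is closed. Suppose it is not: fix sequences $a_n\to a$ and $b_n\to b$ in $X$ with $b_n\in M_{a_n}$ for every $n$ but $b\notin M_a$, and aim for a contradiction. First, we may assume $a\notin M_{a_n}$ for all $n$, since $a\in M_{a_n}$ means $M_a=M_{a_n}\ni b_n$ and then $b=\lim b_n\in M_a$ because $M_a$ is closed. For each $n$ choose a nowhere dense subcontinuum $L_n$ of $X$ with $a_n,b_n\in L_n$; passing to a subsequence, we may assume $L_n\to L$ in the hyperspace $2^X$, so $L$ is a subcontinuum of $X$ with $a,b\in L$. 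If $L$ is nowhere dense, then $L$ is itself a nowhere dense subcontinuum joining $a$ and $b$, so $b\in M_a$ — contradiction. Hence the crucial case is that $L$ has nonempty interior in $X$ (this genuinely occurs: in a Cantor fan a Hausdorff limit of nowhere dense subcontinua can be the whole space).

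In that case I would replace $L$ by a subcontinuum $K\subset L$ irreducible between $a$ and $b$ (Zorn's lemma). Since $b\notin M_a$, $K$ is not nowhere dense, so $K$ has interior in $X$; by the Characterization theorem (Theorem \ref{i}) no indecomposable subcontinuum of $X$ has interior, so $K$ is decomposable, and then Lemma \ref{f} forces both irreducibility sets $\overline A\ni a$ and $\overline B\ni b$ of $K$ to be nowhere dense subcontinua of $X$. I would also record that $B\cap M_a=\varnothing$: if $w\in B\cap M_a$, then a nowhere dense continuum $N\ni a,w$ would meet $K$ in a subcontinuum of $K$ (hereditary unicoherence) containing $a$ and $w$, and irreducibility of $K$ between $a$ and $w$ would give $K\subset N$, impossible since $N$ is nowhere dense and $K$ is not.

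The heart of the proof is then to manufacture, out of the $L_n$, a nowhere dense subcontinuum of $X$ containing both $a$ and $b$, contradicting $b\notin M_a$. The usable structure is that $\interior{K}\subset L=\limsup_n L_n$, so every point of $\interior{K}$ is a limit of points drawn from the $L_n$; hence for all large $n$ the set $L_n\cap K$ is a nonempty subcontinuum of $X$ (hereditary unicoherence again), it is nowhere dense as a subset of $L_n$, and these pieces accumulate on all of $\interior{K}$. Following the template of the proof of Theorem \ref{g}, I would choose a decreasing sequence of subcontinua $K=K_0\supset K_1\supset\cdots$ with $\overline B\subset K_m^{\mathrm{o}}$ (interior relative to $K$) and $\bigcap_m K_m=\overline B$ — available from the canonical monotone decomposition of the decomposable irreducible continuum $K$ onto an arc, or by a metric construction as in Theorem \ref{g} — and then select indices $n(m)\uparrow\infty$ and pass to the pieces $L_{n(m)}\cap K_m$. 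Since $K_m\to\overline B$, these pieces accumulate only on $\overline B$, so their union together with $\overline A$ and $\overline B$ is closed and, by the Baire category theorem, nowhere dense (a nonempty open set cannot be covered by the nowhere dense sets $\overline A,\overline B$ and countably many further nowhere dense sets). Concatenating the pieces — keeping every intersection with a subcontinuum of $K$ connected, exactly as $L=\bigcup_n(L_n\cap K_n)$ is assembled in the proof of Proposition \ref{b} — and attaching $\overline A$ at the $a$-end and $\overline B$ at the $b$-end should produce the required nowhere dense subcontinuum joining $a$ and $b$.

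The main obstacle is precisely this construction, and specifically the tension between nowhere-density and connectedness: the pieces $L_{n(m)}\cap K_m$ must be chosen so as to link up — each to the previous one, or to $\overline A$, or to $\overline B$ — into a single continuum, yet the $L_n$ here carry no built-in overlaps (unlike the continua in the proof of Theorem \ref{g}, which meet at common points $x_{n+1}$), so the indices $n(m)$ and the continuum-neighbourhoods $K_m$ must be chosen together, interleaving the ``funnelling towards $\overline B$'' with the ``staying connected''. Hereditary unicoherence, the nowhere-density of each $L_n$ and of $\overline A,\overline B$, and the Baire-category bound above are the ingredients I would expect to make the bookkeeping of Theorem \ref{g} adaptable to this setting.
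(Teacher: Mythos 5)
Your reduction (WLOG $a\notin M_{a_n}$, hence each $L_n$ avoids $M_a$ eventually) and the final target (a nowhere dense subcontinuum joining $a$ and $b$, or some contradiction with the reduction) are sound, but the argument has a genuine gap exactly where you flag it: the concatenation of the pieces $L_{n(m)}\cap K_m$ into a single continuum is never carried out, and it is not mere bookkeeping. The $L_n$ share no points, so nothing forces the union of the selected pieces, even together with $\overline A$ and $\overline B$, to be connected; moreover your funnel toward $\overline B$ does not even guarantee the pieces are nonempty or that the chain reaches the $a$-end, since $a_{n(m)}$ and $b_{n(m)}$ need not lie in $K$, and the interior of $K$ in $X$ (which is where the $L_n$ are known to accumulate) may be disjoint from $K_m$ for large $m$. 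So the proposal stops one step short of the statement, and the preliminary machinery (Hausdorff limit, irreducible $K$, Theorem \ref{i}, Lemma \ref{f}) does not by itself supply the missing gluing.

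The missing idea, and the way the paper's proof closes this, is to work with the meager composants themselves rather than inside an irreducible continuum: since $M_x$ and $M_y$ are closed they are ample (Proposition \ref{j}), so if $M_x\neq M_y$ there are continuum neighborhoods $C_1\supset C_2\supset\cdots$ of $M_x$ with $\bigcap_i C_i=M_x$ and a continuum neighborhood $D$ of $M_y$ with $C_1\cap D=\varnothing$. Choosing $n_i\geq N$ with $x_{n_i}\in C_i$ and $y_{n_i}\in D$, consecutive pieces are glued \emph{through} $D$: the set $(L_{n_i}\cup D\cup L_{n_{i+1}})\cap C_i=(L_{n_i}\cup L_{n_{i+1}})\cap C_i$ is connected by hereditary unicoherence (each $L_{n_j}$ meets $D$ at $y_{n_j}$) and contains both $x_{n_i}$ and $x_{n_{i+1}}$ --- precisely the overlap-free linking device your sketch lacks. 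The closure of the union of these sets is nowhere dense (it misses the interior of $M_x$, and every compact neighborhood in $X\setminus M_x$ meets only finitely many constituents), and it contains $x$ and $x_{n_1}$, forcing $L_{n_1}\cap M_x\neq\varnothing$ and contradicting the reduction. If you want to salvage your own scheme, you need an analogous fixed continuum, disjoint from the shrinking neighborhoods, through which consecutive $L_{n(m)}$ can be joined; as written, the ``interleaving'' you describe is exactly the unproved step.
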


\begin{proof} Suppose that $\langle x_0,y_0\rangle, \langle x_1,y_1\rangle,\ldots  \in E$ and $\langle x_n,y_n\rangle\to \langle x,y\rangle\in X\times X$. And suppose that $M_x$ and $M_y$ are closed. We want to show $\langle x,y\rangle\in E$. To that end, for each $n\geq 0$ let $L_n$ be a nowhere dense subcontinuum of $X$ containing $x_n$ and $y_n$.  We may assume that there exists $N$ such that $L_n\cap M_x=\varnothing$ for all $n\geq N$.

For a contradiction, suppose that $\langle x,y\rangle\notin E$.   Then  $M_x$ and $M_y$ are disjoint and ample (Proposition \ref{j}). Thus there are continuum neighborhoods $C_1\supset C_2\supset \ldots$ and $D$  of $M_x$ and $M_y$ respectively such that $C_1\cap D=\varnothing$ and $\bigcap_{i=1}^\infty C_i =M_x$.   For each $i\geq 1$ choose $n_i\geq N$ such that $x_{n_i}\in C_{i}$ and $y_{n_i}\in D$. The set $$(L_{n_i}\cup L_{n_{i+1}}\cup D)\cap C_{i}=(L_{n_i}\cup L_{n_{i+1}})\cap C_{i}$$ contains $\{x_{n_i},x_{n_{i+1}}\}$ and  is connected by hereditary unicoherence of $X$. So $$L=\bigcup_{i=1}^\infty (L_{n_i}\cup L_{n_{i+1}})\cap C_{i}$$ is connected and $\overline L$ is a continuum.   Note that $\overline L$ is nowhere dense because  it misses the interior of $M_x$ and every compact neighborhood in $X\setminus M_x$ intersects only finitely many  constituents of $L$.  Thus $\{x_{n_1},x\}\subset \overline L$ implies $x_{n_1}\in M_x$. But   $L_{n_1}\cap M_x\neq\varnothing$ contradicts our earlier assumption.  Therefore $\langle x,y\rangle\in E$.\end{proof}

\begin{ut}[{{Meager decomposition}}] \label{l} Let $X$ be a hereditarily unicoherent continuum.  If  every indecomposable subcontinuum of $X$ is nowhere dense, then $$\mathcal M_X=\big\{M_x:x\in X\big\}$$ is an upper semi-continuous decomposition of $X$ and  the space $\mathcal M_X$ is a dendrite.\end{ut}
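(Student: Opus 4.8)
The plan is to prove that the quotient map $q\colon X\to\mathcal M_X$ is monotone and that $\mathcal M_X$ is a locally connected, hereditarily unicoherent continuum --- this is enough, since a metric continuum is a dendrite exactly when it is locally connected and hereditarily unicoherent (equivalently, locally connected with no simple closed curve), cf.\ \cite[Chapter 10]{nad}.

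First I would assemble the formal part. By Theorem \ref{i} the hypothesis forces every meager composant of $X$ to be closed, and since each $M_x$ is also connected it is a subcontinuum of $X$; thus $q$ is monotone. By Lemma \ref{k} the relation $E=\{\langle x,y\rangle:y\in M_x\}$ is closed in $X\times X$, so for closed $A\subseteq X$ the saturation $\bigcup\{M_x:x\in A\}$, being the image of the compact set $(A\times X)\cap E$ under projection onto the second coordinate, is closed; hence $\mathcal M_X$ is upper semi-continuous and therefore, by \cite[Theorem 3.10]{nad}, a continuum in the quotient topology. Hereditary unicoherence of $\mathcal M_X$ then falls out of monotonicity: if $A$ and $B$ are subcontinua of $\mathcal M_X$, then $q^{-1}(A)$ and $q^{-1}(B)$ are subcontinua of $X$ (monotone preimages of subcontinua are subcontinua), so $q^{-1}(A)\cap q^{-1}(B)=q^{-1}(A\cap B)$ is connected by hereditary unicoherence of $X$, and hence its continuous image $A\cap B$ is connected as well.

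The crux is local connectedness of $\mathcal M_X$, and I would obtain it by contradiction using the classical theorem (Whyburn) that a metric continuum is locally connected if and only if it contains no \emph{continuum of convergence} --- that is, no nondegenerate subcontinuum $Q$ which is the Hausdorff limit of a sequence of subcontinua $Q_1,Q_2,\dots$ with $Q_n\cap Q=\varnothing$ for all $n$. Suppose $\mathcal M_X$ contained such a $Q$, with $Q_n\to Q$, and set $\hat Q_n=q^{-1}(Q_n)\subseteq X$. Passing to a subsequence we may assume $\hat Q_n\to R$ in the hyperspace $2^X$, and $R$ is then a subcontinuum of $X$. Since the induced hyperspace map $C\mapsto q(C)$ is continuous and $q$ is surjective, $q(R)=\lim q(\hat Q_n)=\lim Q_n=Q$; in particular $R$ is nondegenerate and $R\subseteq q^{-1}(Q)$. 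As $Q_n\cap Q=\varnothing$ gives $\hat Q_n\subseteq X\setminus q^{-1}(Q)$, we get $\hat Q_n\cap R=\varnothing$ for every $n$, so $R$ is a continuum of convergence in $X$. But any continuum of convergence is nowhere dense: $R\subseteq\overline{\bigcup_n\hat Q_n}\subseteq\overline{X\setminus R}$, which forces the interior of $R$ to be empty. So $R$ is a nowhere dense subcontinuum of $X$, whence $R\subseteq M_x$ for any $x\in R$, and therefore $q(R)$ is a single point --- contradicting that $q(R)=Q$ is nondegenerate. Thus $\mathcal M_X$ has no continuum of convergence, so it is locally connected, and the proof is complete.

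I expect the local-connectedness step to be the main obstacle: upper semi-continuity and hereditary unicoherence of $\mathcal M_X$ are near-formal once Theorem \ref{i} and Lemma \ref{k} are available, but showing $\mathcal M_X$ is Peano requires the observation that the obstructions to local connectedness in $X$, namely its continua of convergence, are automatically nowhere dense and hence swallowed by single meager composants --- so that passing to $\mathcal M_X$ irons them out. (An alternative would be to verify Property S for $\mathcal M_X$ directly, but the route through continua of convergence is cleaner.)
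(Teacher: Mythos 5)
Your proof is correct, and its key step goes by a genuinely different route than the paper's. The formal parts coincide: like the paper, you get closedness of the fibers from Theorem \ref{i}, upper semi-continuity from Lemma \ref{k} (the paper cites \cite[Exercise 7.17]{nad} where you project the compact set $(A\times X)\cap E$, which is the same content), and the absence of simple closed curves from monotonicity plus hereditary unicoherence of $X$ (your hereditary-unicoherence formulation of this is equivalent). The divergence is in local connectedness: the paper deduces it from ampleness of closed meager composants (Proposition \ref{j}), showing that the component of $M_x$ in a saturated open set $\bigcup\mathcal U$ is an open union of composants, whereas you avoid Proposition \ref{j} entirely and instead pull a convergence continuum of $\mathcal M_X$ back to $X$, observe that any such limit continuum $R$ is nowhere dense (being contained in $\overline{X\setminus R}$), and conclude it collapses to a point under $q$, a contradiction. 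Your route is self-contained modulo one classical fact and nicely isolates where the meager-composant structure enters (nowhere dense continua are swallowed by fibers); the paper's route is shorter given that Proposition \ref{j} is already in hand and reuses the ampleness machinery that drives the rest of the paper. One caveat: the Whyburn-type statement you quote is not an equivalence for local connectedness --- a disk is locally connected yet contains nondegenerate convergence continua (parallel segments converging to a segment); the correct iff is with \emph{hereditarily} locally connected continua. Fortunately the only direction you use, namely that a continuum which is not locally connected must contain a nondegenerate convergence continuum (obtained from infinitely many distinct components of a closed neighborhood clustering at a point of non-local-connectedness, via boundary bumping), is true and classical, so this is a misstatement rather than a gap; you should just cite or state that one direction.
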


\begin{proof} Suppose that every indecomposable subcontinuum of $X$ is nowhere dense. By  Theorem \ref{i} every meager composant of $X$ is closed. By Lemma \ref{k} and  \cite[Exercise 7.17]{nad},   $\mathcal M_X$ is upper semi-continuous. Further, since each meager composant of $X$ is ample (Proposition \ref{j}), if  $\mathcal U$ is any open subset of $\mathcal M_X$ and $M_x\in \mathcal U$ then the connected component of $M_x$ in $\bigcup\mathcal U$ is an open union of meager composants. Therefore $\mathcal M_X$ is locally connected. It does not contain a simple closed curve because $X$ is hereditarily unicoherent and the quotient map $X\to\mathcal M_X$  is monotone. Therefore $\mathcal M_X$  is a dendrite.\end{proof}

\begin{uc}\label{m}If $X$ is a tree-like continuum and every indecomposable subcontinuum of $X$ is nowhere dense, then $X$ has an monotone upper semi-continuous decomposition into a dendrite whose elements are the  meager composants of $X$.\end{uc}

\section{Number of meager composants}

For a hereditarily unicoherent continuum $X$ the following conditions are easily seen to be equivalent:  $|\mathcal M_X|=1$; every two points of $X$ are contained in a nowhere dense subcontinuum of $X$;  every irreducible subcontinuum of $X$ is nowhere dense. The following theorem states that the only alternative to $|\mathcal M_X|=1$ is $|\mathcal M_X|=2^{\aleph_0}$.

\begin{ut}\label{n}If $X$ is a hereditarily unicoherent continuum then $|\mathcal M_X|\in \big\{1,2^{\aleph_0}\big\}$.\end{ut}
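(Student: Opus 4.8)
The plan is to establish a dichotomy by showing that if $|\mathcal M_X|\neq 1$ then $X$ admits a ``branching'' structure that forces $2^{\aleph_0}$ meager composants. Since $|\mathcal M_X|\leq |X|\leq 2^{\aleph_0}$ always holds, only the lower bound needs work. First I would split into two cases according to Theorem~\ref{i}. If every indecomposable subcontinuum of $X$ is nowhere dense, then by Theorem~\ref{l} the quotient $\mathcal M_X$ is a nondegenerate dendrite (nondegenerate because $|\mathcal M_X|\neq 1$), and a nondegenerate dendrite has cardinality $2^{\aleph_0}$; composing with the fact that the quotient map is onto, we get $|\mathcal M_X|=2^{\aleph_0}$ immediately.

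The remaining case is when $X$ contains an indecomposable subcontinuum $K$ with nonempty interior. Here I would invoke Lemma~\ref{h}: for each $x\in K$, the set $M_x\cap K$ is a composant of $K$, so distinct composants of $K$ lie in distinct meager composants of $X$. Since an indecomposable continuum has exactly $2^{\aleph_0}$ composants (this is classical --- e.g.\ \cite[Theorem 11.15]{nad}), the map sending a composant $\kappa$ of $K$ to the meager composant of $X$ containing it is an injection from a set of size $2^{\aleph_0}$ into $\mathcal M_X$, giving $|\mathcal M_X|\geq 2^{\aleph_0}$, hence equality.

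The main subtlety --- though it is really just bookkeeping --- is making sure the two cases are genuinely exhaustive and that in the first case the dendrite is nondegenerate: this is where the hypothesis $|\mathcal M_X|\neq 1$ is consumed, via the observation at the start of Section~7 that $|\mathcal M_X|=1$ is equivalent to every irreducible subcontinuum being nowhere dense. I expect no serious obstacle; the only external inputs are the standard facts that nondegenerate dendrites and the composant-spaces of indecomposable continua both have cardinality exactly $2^{\aleph_0}$, both of which are available in \cite{nad}. An alternative uniform approach would be to build a Cantor-set-worth of pairwise-separating irreducible subcontinua directly by a fusion/tree construction, but routing through Theorems~\ref{i} and~\ref{l} and Lemma~\ref{h} is far cleaner and avoids any new combinatorial argument.
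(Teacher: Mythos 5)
Your proposal is correct and follows essentially the same route as the paper: the same case split on whether some indecomposable subcontinuum has interior, with Theorem \ref{l} handling the nowhere dense case (nondegenerate continuum/dendrite has cardinality $2^{\aleph_0}$) and Lemma \ref{h} plus the $2^{\aleph_0}$-composants fact handling the other. The only difference is bibliographic: the exact cardinality $2^{\aleph_0}$ of the composant family is cited in the paper from Mazurkiewicz rather than from Nadler, whose statement gives uncountability.
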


\begin{proof} The proof is by cases.

\begin{enumerate}[leftmargin=50pt]
\item[\underline{Case 1}:] Every indecomposable subcontinuum of $X$ is nowhere dense.  Then by Theorem \ref{l} $\mathcal M_X$ is a continuum. Every non-degenerate continuum has cardinality $2^{\aleph_0}$, so if $|\mathcal M_X|>1$ then $|\mathcal M_X|=2^{\aleph_0}$.

\item[\underline{Case 2}:] There is an indecomposable continuum $K\subset X$ with interior. Then  $K$ has $2^{\aleph_0}$ composants \cite{maz}, and no two composants of $K$  belong to the same meager composant of $X$ (Lemma \ref{h}). Therefore $|\mathcal M_X|=2^{\aleph_0}$.\qedhere
\end{enumerate}\end{proof}

\begin{uc}\label{o}Every tree-like continuum has exactly $1$ or $2^{\aleph_0}$ meager composants.\end{uc}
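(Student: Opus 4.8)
The plan is to derive Corollary~\ref{o} as an immediate specialization of Theorem~\ref{n}. The only input needed is the structural fact, already recorded in Section~2, that every tree-like continuum is hereditarily unicoherent \cite[p.~232]{nad}. So the first step is simply to invoke this: if $X$ is tree-like, then $X$ is hereditarily unicoherent, and Theorem~\ref{n} applies verbatim to give $|\mathcal M_X| \in \{1, 2^{\aleph_0}\}$.

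Since there is essentially nothing to prove beyond citing Theorem~\ref{n}, the ``main obstacle'' is really just making sure the hypothesis transfer is clean—there is no additional case analysis or construction required, because the dichotomy in Theorem~\ref{n} (every indecomposable subcontinuum is nowhere dense, versus there exists an indecomposable subcontinuum with interior) is already exhaustive for all hereditarily unicoherent continua, tree-like or not. One might optionally remark that tree-like continua can in fact realize both alternatives: arcs and dendrites give $|\mathcal M_X| = 1$, while the pseudo-arc (which is tree-like and indecomposable) gives $|\mathcal M_X| = 2^{\aleph_0}$; but this is commentary, not part of the proof.

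\begin{proof} Every tree-like continuum is hereditarily unicoherent \cite[p.~232]{nad}. The conclusion is therefore an instance of Theorem~\ref{n}. \end{proof}
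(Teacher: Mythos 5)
Your proof is correct and is exactly the paper's (implicit) argument: tree-like continua are hereditarily unicoherent, so Corollary~\ref{o} is an immediate instance of Theorem~\ref{n}. One caveat about your optional commentary (not the proof itself): an arc has $2^{\aleph_0}$ meager composants, since every nondegenerate subcontinuum of an arc has interior and hence its meager composants are singletons (consistent with Corollary~\ref{q} on chainable continua), so arcs do not witness the case $|\mathcal M_X|=1$; a better example of a tree-like continuum with a single meager composant is the Cantor fan, where any two points lie in the union of two legs, which is a nowhere dense subcontinuum.
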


We end with an application to chainable continua. A continuum $X$ is \textbf{chainable} if for every $\varepsilon>0$ there are open sets $U_1,\ldots, U_n$ covering $X$  such that  $\text{diam}(U_i)<\varepsilon$ and $U_i\cap U_j\neq\varnothing$$\iff$$|i-j|\leq 1$ for all $i,j\leq n$. 

\begin{uc}\label{q}Every chainable continuum has $2^{\aleph_0}$ meager composants.\end{uc}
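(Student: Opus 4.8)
The plan is to reduce Corollary \ref{q} to Corollary \ref{o} (equivalently, Theorem \ref{n}) by ruling out the case $|\mathcal M_X|=1$. Since every chainable continuum is tree-like, it is in particular hereditarily unicoherent, so Theorem \ref{n} applies and gives $|\mathcal M_X|\in\{1,2^{\aleph_0}\}$. Thus it suffices to show that a chainable continuum cannot have exactly one meager composant, i.e.\ cannot have the property that every two of its points lie in a common nowhere dense subcontinuum.

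First I would recall the classical fact that every chainable continuum is irreducible between some pair of points $a,b$ (the ``endpoints'' coming from the chain covers; see \cite[Theorem 12.10 or Exercise 12.17]{nad}). In fact one can do better: a chainable continuum is irreducible between the ``limit'' endpoints, and no proper subcontinuum contains both. In particular $X$ itself is the only subcontinuum containing both $a$ and $b$, so $X$ is not nowhere dense (it is the whole space). Hence $a$ and $b$ do not lie in any nowhere dense subcontinuum together, so $M_a\neq M_b$ and $|\mathcal M_X|\geq 2$. Combined with Theorem \ref{n}, this forces $|\mathcal M_X|=2^{\aleph_0}$.

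The one point requiring care is the appeal to irreducibility: I must make sure I cite the correct statement, namely that a chainable continuum is irreducible (between two points), which is standard — for any $\varepsilon$-chain $U_1,\dots,U_n$ the first and last links ``pin down'' two points, and taking a sequence of finer chains and a convergent subsequence of such point-pairs yields $a\in\bigcap$ (first links) and $b\in\bigcap$ (last links) with $X$ irreducible between them. Actually, to keep the argument self-contained within the paper's framework, I could instead invoke the equivalence stated at the beginning of Section 7: $|\mathcal M_X|=1$ iff every irreducible subcontinuum of $X$ is nowhere dense. So the whole corollary reduces to: \emph{a chainable continuum has a subcontinuum (namely itself, or a proper one) which is irreducible and not nowhere dense.} Since $X$ is chainable it is irreducible between two of its points, and $X$ is obviously not nowhere dense in itself, so the criterion fails and $|\mathcal M_X|\neq 1$. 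By Corollary \ref{o}, $|\mathcal M_X|=2^{\aleph_0}$.

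I expect the main (and only) obstacle to be pinning down the precise reference for ``chainable $\Rightarrow$ irreducible,'' and phrasing it so that it plugs cleanly into the Section 7 equivalence; everything else is a one-line deduction from Theorem \ref{n}.
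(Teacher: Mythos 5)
Your proposal is correct and matches the paper's own argument: the paper likewise notes that a chainable continuum is irreducible (hence $|\mathcal M_X|>1$) and is arc-like, hence tree-like, so Corollary \ref{o} forces $|\mathcal M_X|=2^{\aleph_0}$. The only difference is the citation detail for irreducibility (the paper uses \cite[Theorem 12.4]{nad}), which does not affect the argument.
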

 
 \begin{proof}If $X$ is a chainable continuum, then $X$ is irreducible \cite[Theorem 12.4]{nad} and therefore $|\mathcal M_X|>1$. Additionally, $X$ is arc-like \cite[Theorem 12.11]{nad}. By Corollary \ref{o} we have $|\mathcal M_X|=2^{\aleph_0}$.
\end{proof}

\section*{Appendix: Homogeneous continua}

 A continuum $X$ is \textit{homogeneous} if for every two points $x,y\in X$ there is a homeomorphism $h$ of $X$ onto itself such that $h(x)=y$. Homogeneous continua form a class of spaces with very uniform structures, with fundamental examples such as the circle, Menger universal curve, pseudo-arc, circle of pseudo-arcs, and solenoids.    
  We establish the following classification in terms of meager composants.

\begin{ut}If $X$ is a homogeneous continuum, then precisely  one of the following holds.
\begin{enumerate}[label=\textnormal{(\alph*)}]
\item  $X$ has only one meager composant,
\item $X$ has proper dense meager composants, or 
\item  $X$ is a circle of indecomposable continua (i.e.\ there is continuous decomposition of $X$ into indecomposable subcontinua such that the decomposition space is a simple closed curve). 
\end{enumerate} \end{ut}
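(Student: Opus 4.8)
The plan is to leverage homogeneity to turn \emph{local} behavior of meager composants into \emph{global} behavior, and then to apply the structural results from Sections 4--6. First I would observe that, by homogeneity, either every meager composant is dense in $X$ or no meager composant is dense; indeed if $M_x$ is dense then applying a homeomorphism carrying $x$ to any other point $y$ shows $M_y$ is dense too. Combined with the partition property of $\mathcal M_X$, density of all meager composants forces either $|\mathcal M_X|=1$ (if some, hence every, $M_x=X$) or case (b) (proper dense meager composants, which happens exactly in the indecomposable—non-hereditarily-indecomposable—type situations, e.g.\ the pseudo-arc or solenoids). So the remaining work is to show that if \emph{no} meager composant is dense, then $X$ is a circle of indecomposable continua.

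Assume then that every $M_x$ is non-dense, i.e.\ $\overline{M_x}\neq X$ for all $x$. The first key step is to show this closure is in fact a \emph{proper subcontinuum}, and then to argue that $\mathcal M_X$ behaves like a usable decomposition. Here I would want to invoke (a homogeneous analogue of) the machinery in Sections 5--6, but the obstacle is that those results assume hereditary unicoherence, which homogeneous continua need not satisfy (the circle of pseudo-arcs is the paradigm). So the real content is: in a homogeneous continuum with no dense meager composant, the meager composants still form an upper semi-continuous decomposition, and the quotient $\mathcal M_X$ is a homogeneous locally connected continuum. Homogeneity of the quotient follows because homeomorphisms of $X$ descend to $\mathcal M_X$; local connectedness should follow from an ampleness-type argument as in the proof of Theorem \ref{l}, since a non-dense meager composant in a homogeneous continuum ought to be ample (one can run the Proposition \ref{a} argument, using homogeneity to rule out the indecomposable-with-interior obstruction that hereditary unicoherence otherwise kills). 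A homogeneous locally connected continuum of dimension $\ge 1$ that is not a single point: by the classical Bing--Borsuk / classification results, in the one-dimensional case it must be a simple closed curve (the only homogeneous Peano continuum that is one-dimensional). If instead $\mathcal M_X$ is a single point then $X=\overline{M_x}$ for some $x$, contradicting non-density unless $X$ itself is a single meager composant, i.e.\ case (a).

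So the skeleton is: (1) dichotomy ``all meager composants dense'' vs.\ ``none dense'' by homogeneity; (2) in the dense case, split into $|\mathcal M_X|=1$ (case (a)) or proper dense (case (b)); (3) in the non-dense case, show $\mathcal M_X$ is an upper semi-continuous monotone decomposition whose quotient is a homogeneous Peano continuum, hence (being a proper quotient) a simple closed curve, whence the fibers $M_x$ are the indecomposable pieces and $X$ is a circle of indecomposable continua; finally (4) check the three cases are mutually exclusive (a circle of indecomposable continua has closed proper meager composants, so not (a) or (b); a single meager composant is not proper-dense; etc.). The main obstacle, as flagged, is step (3): establishing the upper semi-continuity and local connectedness of the meager-composant decomposition \emph{without} hereditary unicoherence, using only homogeneity. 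I expect this requires showing that in a homogeneous continuum a non-dense meager composant is closed and ample — the closedness should come from a homogeneity-boosted version of Theorem \ref{i} (an indecomposable subcontinuum with interior in a homogeneous continuum would, by homogeneity, force a dense or all-of-$X$ meager composant, contradiction), and ampleness then from Proposition \ref{a} together with closedness exactly as in Proposition \ref{j}. Once those two facts are in hand, the argument of Theorem \ref{l} adapts verbatim, and the classification of one-dimensional homogeneous Peano continua finishes it.
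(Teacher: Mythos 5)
Your dichotomy (all meager composants dense versus none dense, by homogeneity) and the split of the dense case into (a) and (b) are fine, and your instinct that ampleness drives local connectedness of the quotient matches the paper (Proposition \ref{j} is indeed used there). But the heart of the theorem is case (c), and there your proposal has genuine gaps. First, in the ``none dense'' case you still must rule out that some $\overline{M_x}$ is a proper subcontinuum with nonempty interior, and you must get $M_x$ closed; your suggested route --- a ``homogeneity-boosted'' version of Theorem \ref{i}, plus adapting Lemma \ref{k} and Theorem \ref{l} ``verbatim'' --- does not work, because those proofs use hereditary unicoherence at their key steps (the connectedness of $L\cap K_n$, of $L\cap K$ in Lemma \ref{h}, and of $(L_{n_i}\cup L_{n_{i+1}})\cap C_i$ in Lemma \ref{k}), and Example 2 shows the characterization of Theorem \ref{i} genuinely fails without that hypothesis; homogeneity by itself supplies no substitute argument in your sketch. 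The paper instead imports two external results: Prajs--Whittington \cite[Lemma 4.4]{pra3}, which says that in a homogeneous continuum the closures $\overline{M_x}$ partition $X$ and are respected by homeomorphisms --- this is exactly what shows that a closure with interior is clopen (hence all of $X$, landing in (a) or (b)) and that otherwise $\overline{M_x}$ is nowhere dense, so $M_x=\overline{M_x}$ is closed --- and Rogers' decomposition theorem \cite[Theorem 1]{rog}, which then yields in one stroke that $\mathcal M_X$ is a continuous decomposition, that the quotient is a nondegenerate homogeneous continuum, and, crucially, that each $M_x$ is \emph{indecomposable}. Your proposal never addresses indecomposability of the fibers at all, yet that is the substance of alternative (c).

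Second, your final classification step is wrong as stated: the circle is not ``the only homogeneous Peano continuum that is one-dimensional'' --- the Menger universal curve is another (Anderson \cite{ad}) --- and you give no argument that the quotient is one-dimensional in the first place. The paper avoids this by showing that the meager composants of $\mathcal M_X$ are singletons (by monotonicity and lower semi-continuity, the preimage of a nowhere dense subcontinuum of $\mathcal M_X$ is a nowhere dense subcontinuum of $X$, hence lies in a single fiber), so every arc in the locally connected homogeneous quotient has interior, and a homogeneous curve containing a free arc is a simple closed curve by Anderson's theorem. So the correct skeleton is: Prajs--Whittington partition lemma, Rogers' theorem, ampleness (Propositions \ref{a} and \ref{j}) for local connectedness, singleton meager composants of the quotient, then Anderson --- of these five ingredients your proposal supplies only the ampleness step.
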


\begin{proof}  By \cite[Lemma 4.4]{pra3} the closures of meager composants $\overline{M_x}$ partition $X$ and are respected by homeomorphisms. From homogeneity of $X$ it follows that if $\overline{M_x}$ has interior then $\overline{M_x}$  is (cl)open and hence $\overline {M_x}=X$.  In this event $X$ falls into category (a) or (b).  Alternatively, if   $\overline{M_x}$ has empty interior then it is a nowhere dense subcontinuum of $X$, and so  $M_x=\overline{M_x}$.  By \cite[Theorem 1]{rog} the decomposition $\mathcal M_X$ is continuous, the space $\mathcal M_X$ is a  (non-degenerate) homogeneous continuum, and each $M_x$ is indecomposable. It remains to show that $\mathcal M_X$ is a circle. By Proposition \ref{j}, $M_x$ is ample and so $\mathcal M_X$ is locally connected. From monotonicity and  lower semi-continuity of  the decomposition  it can be seen that if $\mathcal L$ is a nowhere dense subcontinuum of $\mathcal M_X$ then $\bigcup \mathcal L$ is a nowhere dense subcontinuum of $X$. So the meager composants of $\mathcal M_X$ are singletons.  Since $\mathcal M_X$ is  path-connected, this implies that it contains an arc with interior. Therefore $\mathcal M_X$ is a circle \cite{ad} and the proof is complete.\end{proof}

A characterization similar to Theorem \ref{i} may exist for homogeneous continua. The following questions would need to be answered.

\begin{uq}Let $X$ be a homogeneous continuum. If $X$ has an indecomposable subcontinuum with interior, then is $M_x$  not closed?\end{uq}

\begin{uq}Let $X$ be a homogeneous continuum. If $X$ has proper dense meager composants, then does $X$ contain an indecomposable continuum? Is $X$ indecomposable?\end{uq}


\begin{thebibliography}{HD}

\bibitem{ad}R.D. Anderson, One-dimensional Continuous Curves and a Homogeneity Theorem, Ann. Math., 68 (1958), 1--16.

\bibitem{1}D. Bellamy, Questions in and out of context, Open Problems in Topology II, Elsevier (2007), 259--262.


\bibitem{eng}R. Engelking, General Topology, Revised and completed edition Sigma Series in Pure Mathematics 6, Heldermann  Verlag, Berlin, 1989.



\bibitem{sing}D.S. Lipham, Singularities of meager composants and filament composants, Topology Appl., Volume 260 (2019) 104--115.


\bibitem{maz} S. Mazurkiewicz, Sur les continus ind\'{e}composables, Fund. Math. 10 (1927), 305--310.

\bibitem{mou}C. Mouron and N. Ordo\~{n}ez, Meager composants of continua, Topology Appl., Volume 210 (2016) 292--310.

\bibitem{nad}S.B. Nadler Jr., Continuum Theory: An Introduction, Pure Appl. Math., vol. 158, Marcel Dekker, Inc., New York, 1992.

\bibitem{pra}J.R. Prajs and K. Whittington, Filament sets and homogeneous continua, Topology Appl. 154 (8) (2007) 1581--1591.


\bibitem{pra3}J.R. Prajs and K.  Whittington, Filament sets and decompositions of homogeneous continua. Topology Appl. 154(2007), no. 9, 1942--1956.

\bibitem{rog}J.T. Rogers Jr.,  Decompositions of homogeneous continua, Pacific J. Math. 99 (1982),137--144. 



\end{thebibliography}
\end{document}